\tikzstyle{block} = [draw,rectangle,thick,minimum height=2em,minimum width=2em]
\tikzstyle{sum} = [draw,circle,inner sep=0mm,minimum size=2mm]
\tikzstyle{connector} = [->,thick]
\tikzstyle{line} = [thick]
\tikzstyle{branch} = [circle,inner sep=0pt,minimum size=1mm,fill=black,draw=black]
\tikzstyle{guide} = []
\tikzstyle{snakeline} = [connector, decorate, decoration={pre length=0.2cm,
\tikzset{
  arm/.default=1cm,
  arm/.code={}, 
  angle/.default=0,
  angle/.code={} 
}
\newtheorem{theorem}{Theorem}
\newtheorem{remark}{Remark}
\DeclareMathOperator{\diag}{diag}
\DeclareMathOperator{\tr}{tr}
\newcommand{\R}{\ensuremath{\mathds{R}}}
\newcommand{\bi}[0]{\ensuremath{\mathbf{i}}}
\newcommand{\bbm}{\begin{bmatrix}}
\newcommand{\ebm}{\end{bmatrix}}
\newcommand{\enrique}[1]{\ifthenelse{\boolean{showcomments}}
{\textcolor{Red}{(Enrique says: #1)}}{}}
\newcommand{\addcite}[0]{\ifthenelse{\boolean{showcomments}}
{\textcolor{Purple}{(add cite(s))}}{}}
\newcommand{\addcites}[0]{\ifthenelse{\boolean{showcomments}}
{\textcolor{Purple}{(add cite(s))}}{}}
\newcommand{\addref}[0]{\ifthenelse{\boolean{showcomments}}
{\textcolor{Purple}{(add ref)}}{}}
\newcommand{\todo}[1]{\ifthenelse{\boolean{showcomments}}
{\textcolor{Purple}{(To Do: #1)}}{}}
\newcommand{\highlight}[1]{\ifthenelse{\boolean{showcomments}}
{\textcolor{blue}{#1}}{#1}}
\title{\LARGE \bf 
iDroop: A dynamic {droop} controller to decouple power grid's steady-state and  dynamic performance
}
\author{Enrique Mallada
\thanks{
The author is with the Department of Electrical and Computer Engineering, Johns Hopkins University, Baltimore, MD 21218 USA (e-mail: mallada@jhu.edu).
This work was supported by NSF CPS grant CNS 1544771, Johns Hopkins E$^2$SHI Seed Grant, and Johns Hopkins WSE startup funds.
}
}
\begin{document}

\maketitle
\begin{abstract}                
This paper presents a novel Dynam-\emph{i}-c \emph{Droop} (\emph{iDroop}) control mechanism to perform primary frequency control with gird-connected inverters  that improves the network dynamic performance. The work is motivated by the dynamic degradation experienced by the power grid due to the increase in asynchronous inverted-based generation. We show that the widely suggested virtual inertia solution suffers from unbounded noise amplification (infinite $\mathcal H_2$ norm) when measurement noise is considered.  This suggests that virtual inertia could potentially further degrade the grid performance once broadly deployed.

This motivates the proposed solution in this paper that overcomes the limitations of virtual inertia controllers while sharing the same advantages of  traditional droop control. In particular, our iDroop controllers are decentralized, rebalance supply and demand, and provide power sharing. 
Furthermore, our solution can improve the dynamic performance without affecting the steady state solution. 
Our algorithm can be incrementally deployed and can be guaranteed to be stable using a decentralized sufficient stability condition on the parameter values. 
We illustrate several features of our solution using numerical simulations.
\end{abstract}

\section{Introduction}
Droop control has a long history in power system frequency control~\cite{Ziebolz:1947tq}. 
It is perhaps one of the simplest -- and yet very effective -- decentralized mechanisms that achieve synchronization and supply-demand balance in a network with several generating resources running in parallel~\cite{Almeras:1951vh}. While its implementation may vary depending on the specific device, its basic operational principle remains unchanged: whenever frequency is above (below) the nominal value, decrease (increase) power proportionally to the frequency deviation. Thus it is usually referred as the primary layer of the frequency control architecture~\cite{Machowski:2011uc}.
Not surprisingly, its many benefits have made droop control one of the features of power system engineering that have successfully survived decades of technological advances.

%
Unfortunately, the very principles that this mechanism relies on are becoming less and less valid due to several reasons. Firstly, droop control relies on the fact that demand is frequency dependent, yet with the increase of power electronic based loads the aggregate load is becoming less sensitive to frequency~\cite{WoodWollenberg1996}. Secondly, while traditional generation always provides some level of droop control, renewable generation is insensitive to frequency fluctuations. The remaining conventional generators are forced to handle the whole burden of regulating the frequency. Thirdly, newer types of generators have little or no inertia at all when compared with traditional ones, which is slowly introducing a dynamic degradation that concerns many utilities~\cite{Kirby:2005uy}.

Recently, a large body of literature has been developed on the design of distributed control mechanisms with the objective to synthetically generate frequency responsive generation and demand that can overcome the diminishing participation of these elements in primary frequency control~\cite{Zhao:2014bp,Zhang:2015kq,Dorfler:2014uu,Li:2014we}. In particular, the proposed strategies seek to introduce a more frequency responsive devices, either on the load side~\cite{Zhao:2014bp,Mallada:2014ui} or on the generation side~\cite{Li:2014we}, that not only implements droop characteristics for primary frequency control, but also guarantee higher level operational constraints such as restoring frequency to nominal value~\cite{mallada2014distributed}, preserving inter area flow constraints,  respecting thermal limits, and providing efficiency\cite{Mallada:2014ui}. While these solutions directly address the loss of frequency responsiveness in the grid, they do not explicitly address dynamic degradation. 

Interestingly, droop control can in principle also provide dynamic performance improvement by properly selecting the droop coefficients~\cite{Motter:2013iw}. For example, in an under-damped power grid, decreasing the droop coefficient can reduce the frequency Nadir (maximum frequency excursion) in the same way increasing friction can reduce the overshoot of an under-damped spring-mass system. However this is not a practical solution since it also requires the generator to take a larger share of the supply-demand imbalance. As a result, the efficiency of the steady-state resource allocation becomes intrinsically coupled with the possible dynamic performance improvement. It is the purpose of this work to eliminate this coupling.

 This paper proposes a novel dynam-{\it i}-c {\it droop} (iDroop) control algorithm that is able to maintain all the desired features of traditional droop control while providing enough design flexibility to improve the dynamic performance. More precisely, we present a control scheme --whose input is frequency and output is power generation-- that preserves the same steady-state characteristics as traditional droop control, maintains grid stability, and provides supply-demand balance. Our iDroop controller can be implemented by inverters providing power from renewable sources or by intelligent loads. 
Finally, we numerical demonstrate that one can use the additional flexibility to improve the $\mathcal H_2$-norm of the system.


\emph{Paper Organization:} Section \ref{sec:model} describes the power network model as well as several inverter operational modes used to interface with the grid. Section \ref{sec:steady-vs-dynamic} uses dynamic and steady state performance metrics to motivate the need for a novel droop control solution. Section \ref{sec:idroop} introduces the proposed iDroop control and shows how  our solution is able to preserve the same steady state as droop control while providing enough flexibility to improve dynamic performance.  Moreover, we provide a decentralized sufficient condition on the parameter values that guarantee the stability of our controllers.
We numerically illustrate the functionalities of iDroop in Section \ref{sec:simulations} and conclude in Section \ref{sec:conclusions}.

\section{Network Model}\label{sec:model}

We consider a power system composed by $n$ buses denoted by either $i$ or $j$, i.e. $i,j\in N:=\{1,\dots, n\}$. We use $ij$ to denote the transmission line that connects bus $i$ and $j$, and use $E$ to denote the set of lines, i.e. $ij\in E$. Thus the topology of the power system is described by the graph $G=(N,E)$.
The admittance of the line $ij$ is given by $y_{ij}=g_{ij}-\bi b_{ij}$, where $g_{ij}$ and $b_{ij}$ denote the conductance and susceptance of line $ij$, respectively. The state of the network is described by the complex voltages $(V_i)_{i\in N}=(v_ie^{j\theta_i})_{i\in N}$ where $v_i$ and $\theta_i$ represent the voltage magnitude and phase of bus $i$, respectively.

\subsection{Generators and Loads}\label{ssec:generator}

We model the dynamics of each conventional generator using the standard swing equations~\cite{Shen:1954eo,Bergen:1981hja}. We denote the frequency of each generator by $\omega_i$ which evolves according to 
\begin{subequations}\label{eq:generation}
\begin{align}
\dot\theta_i &= \omega_i,\\
M_i\dot\omega_i &= p^{in} +q^r_i - (D_i+\frac{1}{R_i^g})\omega_i -P_i^e,
\end{align}
\end{subequations}
where $M_i$ denotes the aggregate generator inertia, $D_i$ is the aggregate damping and frequency dependent load coefficient, and $R_i^g$ is the droop coefficient. $p_i^{in}$ denotes the net constant power injection at bus $i$, $q^r_i$ is the controllable input power injected by grid-connected inverters, and $P_i^e$ denotes the net electric power drawn by the grid.

\subsection{Inverters}
In this paper we seek to develop a new control scheme that can be implemented by  inverters to improve the dynamic performance of the power grid. Since the power electronics of the inverters are significantly faster than the electromechanical dynamics of the generators, we assume that inverters can statically update its power, i.e.
\begin{equation}\label{eq:inverter}
q^r_i = u_i,
\end{equation}
where $u_i$ is the command input. We assume that \eqref{eq:inverter} represents the aggregate power of all the inverters connected at bus $i$.

There are different operational modes in which inverters can be interfaced with the power grid~\cite{Lee:2013fk,CHANDORKAR:1993tp,SimpsonPorco:2013gs,Liu:2016kr}.
Here we briefly review the most common ones:

\noindent
{\it Constant Power:} This is the default operational mode in today's grid and amounts to setting
\begin{equation}\label{eq:inverter-constant-power}
u_i = q_i^{r,0},
\end{equation}
where $q_i^{r,0}$ is a constant parameter representing power generation set point.

\noindent
{\it Droop Control:} This mode aims to change the power injection of the inverter to provide additional droop capabilities by setting
\begin{equation}\label{eq:inverter-constant-power}
u_i = q_i^{r,0} - \frac{1}{R^r_i}\omega_i,
\end{equation}
where $R^r_i$ is the droop coefficient.

\noindent
{\it Virtual Inertia:} This operational mode has been recently proposed~\cite{Beck:eb,Driesen:ft} as an alternative method  to compensate the loss of inertia and is given by
\begin{equation}\label{eq:inverter-constant-power}
u_i  = q_i^{r,0} -\frac{1}{R^r_i}\omega_i - M^v_i\dot\omega_i,
\end{equation}
where $M^v_i$ represents the virtual inertia.


\subsection{Power Flow Model}
We consider a Kron-reduced network model~\cite{Varaiya:ig}, where constant impedance loads are implicitly included in the line impedances of the reduced network. Thus every remaining bus represents a grid generator.  
We further use the DC network model which has been widely adopted for purpose of designing frequency controllers for a long time~\cite{Quazza:1970cd,Quazza:1966vx}.


Therefore, the  total electric power drained by the network at bus $i$ is
\begin{equation}\label{eq:network}
P_i^{e}  =  -\sum_{j\in \mathcal N_i} b_{ij}(\theta_i-\theta_j),
\end{equation}
where the set $\mathcal N_i$ denotes the set of neighboring buses adjacent to bus $i$.

\subsection{Network Dynamics}
Combining \eqref{eq:generation}-\eqref{eq:network} we arrive to the following compact description of the system dynamics 
\begin{subequations}\label{eq:system}
\begin{align}
\dot\theta &= \omega\label{eq:system-a}\\
M\dot\omega & = p^{in} + q^r - ({R^g}^{-1}+D)\omega - L_B\theta\label{eq:system-b}
\end{align}
\end{subequations}
where $M:=\diag(M_i,\,i\in N)$, $D:=\diag(D_i,\,i\in N)$,  ${R^g}^{-1}=\diag(\frac{1}{R_i^g},\,i\in N)$, $\omega:=(\omega_i,\,i\in N)$, $p^{in}=(p_i^{in},\,i\in N)$, 
$L_B$ is the $b_{ij}$-weighted Laplacian matrix
\begin{equation}
(L_B)_{ij} = \begin{cases}
-b_{ij}, & \text{if $i\not=j$, $ij\in E$}, \\
\sum_{k\in\mathcal N_i} b_{ik},& \text{if $i=j$,}\\
0,&\text{otherwise,}
\end{cases}
\end{equation}
and  $q^r:=(q^r_i,\,\,i\in N)$ is given by
\begin{equation}\label{eq:qr}
q^r_i = \begin{cases}
q_i^{r,0}, & \text{if $i\in CP$,}\\
q_i^{r,0} -\frac{1}{R_i^r}\omega_i,& \text{if $i\in DC$,}\\
q_i^{r,0} -\frac{1}{R_i^r}\omega_i - M^v_i\dot\omega_i,& \text{if $i\in VI$.}\\
\end{cases}
\end{equation}
The sets $CP$, $DC$ and $VI$ are the subsets of buses that have inverters operating in constant power, droop control and virtual inertia modes, respectively. 

In the absence of higher layer controllers, such as automatic generation control~\cite{deMello:1973jy}, the system can synchronize with a nontrivial frequency deviation from the nominal ($\omega_i^*\not=0$).\footnote{We assume here w.l.o.g. that the nominal frequency is $0$}
Thus we refer to the vector $(\theta^*(t):=\theta^*+\omega^*t,\omega^*,{q^r}^*)$ as a steady state solution of the system \eqref{eq:system}, with \eqref{eq:system-b}  equal zero and $\frac{\text{d}}{\text{d}t} \theta^*(t)=\omega^*$. Furthermore, using \eqref{eq:system-b} it is easy to see that $L_B\theta^*(t)$ is constant, which implies that $\omega^*=\mathbf 1_n \omega^*_0$, with $\mathbf 1_n\in \R^n$ being the vector of all ones, and the scalar $\omega_0^*$ given by
\begin{equation}\label{eq:omega_0}
 \omega_0^* = \frac{\sum_{i=1}^n p_i^{in}+{q_i^{r,0}}}{\sum_{i=1}^n (D_i + {R_i^g}^{-1})+\sum_{i\in DC\cup VI} {R_i^r}^{-1} }.
\end{equation}
In summary, the steady state solution of \eqref{eq:system} and \eqref{eq:qr} is given by 
$(\theta^*(t),\omega^*,{q^r}^*)=(\theta^*+\mathbf{1}_n \omega_0^*t, q^{r*} )$, where $q_i^{r*}=q_i^{r,0}$,  if $i\in CP$, or $q_i^{r*}=q_i^{r,0} - {R_i^r}^{-1}\omega_0^*$, if $i\in DC\cup VI$.
Thus the smallest invariant set that includes all possible steady states  is given by
\begin{equation}\label{eq:E}
\mathcal E\!:=\!\{\!(\theta\!+\!\lambda\mathbf 1_n,\omega,q^r)\!\in\!\mathds R^{3n}\!\!:\!\text{$\lambda\in\mathds R$},\eqref{eq:system-b}=0,\omega_i\!=\!\omega_0^*\}. 
\end{equation}

\section{Steady State and Dynamic Performance }\label{sec:steady-vs-dynamic}

In this section we introduce two metrics, one for steady state and another for dynamic performance, and illustrate how the existing  solutions for inverter control either cannot produce  any 
 dynamic performance improvement, or the performance improvement comes at the cost of steady state deviation from the desired operational point.

\subsection{Steady State Performance}\label{ssec:ss-performance}

We now define the steady state performance metric used in this paper. Let $\delta q_i^g:=-\frac{1}{R_i^g}\omega_i$ and $\delta q_i^r:=q_i^r-q_i^{r,0}$ be the power deviation of generators and inverters, respectively. After a disturbance these quantities need to be modified in order to compensate a  supply-demand imbalance, i.e. 
\begin{equation}\label{eq:supply-demand-balance}
\sum_{i=1}^n \delta q_i^g + \sum_{i\in DC\cup VI}\delta q_i^r  = \Delta P,
\end{equation}
where $\Delta P$ denotes the power imbalance. 
We assign to both conventional and inverter-based generators a cost $c_i^g(q_i^g):=\frac{\alpha^g_i}{2}(q_i^g)^2$ and $c_i^r(q_i^r):=\frac{\alpha^r_i}{2}(q_i^r)^2$ respectively.
Thus given a set deviations satisfying \eqref{eq:supply-demand-balance} the total system cost of mitigating an imbalance of $\Delta P$ is given by
\noindent 
\begin{flalign}\label{eq:ss-cost}
&\text{\bf SS-Cost:} &\sum_{i=1}^n \frac{\alpha^g_i}{2}(\delta q_i^g)^2+\!\!\!\!\! \sum_{ i\in DC\cup VI}\!\!\!\frac{\alpha^r_i}{2}(\delta q_i^r)^2\qquad\qquad
\end{flalign}

One of the main attractive features of droop control is its ability to share the supply-demand mismatch among different resources. Recent works have shown that the steady state allocation of droop control can be represented as the solution of an optimization problem (see e.g.~\cite{Zhao:2014bp}). 

In particular, if we define let $\delta d_i=:D_i\omega_i$ denote the frequency dependent demand deviation, then it can be shown that the system \eqref{eq:system} solves:
\begin{subequations}\label{eq:droop-control-problem}
\begin{align}
\underset{\delta q^g_i,\delta q^r_i,\delta d_i}{\text{minimize}}\;\; &\sum_{i=1}^n \frac{R^g_i(\delta q_i^g )^2}{2} \!+\! \frac{(\delta d_i)^2}{2D_i} \!+\!\!\!\!\!\!\!\! \sum_{\quad i\in DC\cup VI}\!\!\!\!\!\!\!\!\frac{R^r_i(\delta q^r_i )^2}{2}\\
\text{subject to}\;\; & \sum_{i=1}^n p_i^{in} \!+\!q_i^{r,0}\!+\! \delta q_i^g \!-\!\delta d_i\!+\!\!\!\!\!\sum_{i\in DC\cup VI}\!\!\!\!\!\delta q_i =0
\end{align}
\end{subequations}
The proof of this claim is already standard in the community (see e.g. ~\cite{Zhao:2014bp,Zhao:2015kk,Zhang:2015kq,Zhang:2015hn,Kasis:2016vn}), we refer the reader to~\cite{Zhao:2014bp}  for a  proof of a similar statement.

As a result of the steady state characteristic of droop control, it is possible to optimally minimize the steady state cost as it is summarized in the next theorem.

\begin{theorem}[Droop Control Optimality]\label{thm:droop-control-optimality}
Let $(\theta^*(t),\allowbreak\omega^*,\allowbreak{q^r}^*)$ be the steady state solution of \eqref{eq:system} where ${q_i^r}^*$ is given by \eqref{eq:qr}. If $R_i^r=\alpha_i^r$ and $R_i^g=\alpha_i^g$, then ($\delta {q_i^g}^* := -\frac{1}{R_i^g}\omega_0^*$, $\delta {q_i^r}^* := {q_i^r}^*-q_i^{r,0}= -\frac{1}{R_i^g}\omega_0^*$) is the unique allocation  that minimizes the steady state cost \eqref{eq:ss-cost} subject to \eqref{eq:supply-demand-balance}, where $\Delta P:=\sum_{i=1}^n p_i^{in}+q_i^{r,0} - D_i\omega_0^*$.
\end{theorem}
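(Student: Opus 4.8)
The plan is to recognize the minimization of SS-Cost \eqref{eq:ss-cost} subject to \eqref{eq:supply-demand-balance} as a strictly convex program with a single affine equality constraint, characterize its optimum through the first-order (KKT) conditions, and then verify that the droop steady state both satisfies those conditions and is feasible. Because the constraint is affine and the objective convex, the KKT conditions will be necessary and sufficient for global optimality, and strict convexity will hand us uniqueness.

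First I would note that, since $\alpha_i^g>0$ and $\alpha_i^r>0$, the objective \eqref{eq:ss-cost} is a strictly convex quadratic in the decision variables $(\delta q_i^g)_{i=1}^n$ and $(\delta q_i^r)_{i\in DC\cup VI}$, while \eqref{eq:supply-demand-balance} cuts out a nonempty affine subspace defined by a single linear equation with nonzero gradient. A coercive strictly convex function over a nonempty closed affine set has exactly one minimizer, so existence and uniqueness are immediate and it remains only to identify the minimizer.

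Next I would attach a single Lagrange multiplier $\nu$ to the balance constraint and impose stationarity. This yields $\alpha_i^g\,\delta q_i^g=\nu$ for every $i\in\{1,\dots,n\}$ and $\alpha_i^r\,\delta q_i^r=\nu$ for every $i\in DC\cup VI$; that is, optimality is precisely the equalization of all marginal costs to a common value $\nu$. Substituting the hypotheses $\alpha_i^g=R_i^g$ and $\alpha_i^r=R_i^r$ together with the droop steady-state deviations $\delta q_i^{g*}=-\omega_0^*/R_i^g$ and $\delta q_i^{r*}=-\omega_0^*/R_i^r$ gives $\alpha_i^g\,\delta q_i^{g*}=-\omega_0^*=\alpha_i^r\,\delta q_i^{r*}$, so the droop allocation satisfies the stationarity conditions with the single choice $\nu=-\omega_0^*$.

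The only genuine computation is the feasibility check, which I would carry out by summing the droop deviations and substituting the closed form \eqref{eq:omega_0} of $\omega_0^*$. Summing gives $\sum_{i=1}^n\delta q_i^{g*}+\sum_{i\in DC\cup VI}\delta q_i^{r*}=-\omega_0^*\big(\sum_{i=1}^n 1/R_i^g+\sum_{i\in DC\cup VI}1/R_i^r\big)$, and rearranging \eqref{eq:omega_0} into $\omega_0^*\big(\sum_{i=1}^n(D_i+1/R_i^g)+\sum_{i\in DC\cup VI}1/R_i^r\big)=\sum_{i=1}^n(p_i^{in}+q_i^{r,0})$ isolates exactly the bracketed droop-sum against $\Delta P=\sum_{i=1}^n(p_i^{in}+q_i^{r,0})-D_i\omega_0^*$. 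Once feasibility is confirmed, the droop allocation is a feasible point meeting the KKT conditions of a strictly convex program, hence the unique global minimizer. I expect the main obstacle to be purely bookkeeping in this final step: reconciling the sign convention in the definition of $\Delta P$ (the demand term $\delta d_i=D_i\omega_0^*$) and keeping the index sets straight, so that all $n$ buses enter the generator sums while the constant-power buses, for which $\delta q_i^r\equiv 0$, are correctly excluded from the inverter sums.
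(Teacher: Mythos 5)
Your proof is correct and follows essentially the same route as the paper's: characterize the unique minimizer of the strictly convex program via the KKT conditions, verify that the droop steady-state deviations equalize marginal costs under $R_i^g=\alpha_i^g$, $R_i^r=\alpha_i^r$, and confirm feasibility by summing the deviations and invoking \eqref{eq:omega_0}. (Your multiplier $\nu=-\omega_0^*$ even carries the correct sign, whereas the paper's proof writes $\lambda^*=\omega_0^*$, an apparent sign typo.)
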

\begin{proof}
We start by characterizing the optimal solution of minimizing \eqref{eq:ss-cost} subject to \eqref{eq:supply-demand-balance}. Since the problem has a strictly convex objective with linear constraints then there is a {\it unique} solution that is characterized by the Karush-Kuhn-Tucker (KKT) conditions for optimality\cite{Boyd:2004cv}. Thus, $\hat{\delta q^g}^*$ and $\hat{\delta q^r}^*$ is an optimal solution if and only if there exists a scalar $\lambda^*$ such that
\begin{equation}\label{eq:KKT}
\alpha_i^g\hat{\delta q_i^g}^*   = \lambda^*\,\forall i,\;\;\;  \alpha_i^r\hat{\delta q_i^r}^*   = \lambda^*\,i\in DC\cup VI,
\end{equation}
and $\sum_{i=1}^n\hat{\delta q_i^g}^*+\hat{\delta q_i^r}^*=\Delta P$.

To finalize the proof we just need to show that when $R_i^r=\alpha_i^r$ and $R_i^g=\alpha_i^g$, then the steady state deviations of droop control (${\delta {q_i^g}^*} := -\frac{1}{R_i^g}\omega_0^*$ and $\delta {q_i^r}^*:= {q_i^r}^*-q_i^{r,0}$) satisfy the KKT conditions.
\begin{flalign*}
&\!\!\!\!\!\!\!\text{By \eqref{eq:qr},}\;{\delta q_i^r}^*:={q_i^r}^* - q_i^{r,0} =
\begin{cases}
 -\frac{1}{R_i^r}\omega_0^*, &\text{if $i\in DC \cup VI$,}\\
 0, & \text{otherwise.}
 \end{cases}
\end{flalign*}
Therefore,  since by definition ${\delta q^g}^*= -\frac{1}{R_i^g}\omega_0^*$, then we have that when $R_i^r=\alpha_i^r$ and $R_i^g=\alpha_i^g$,  $({\delta q^g}^*,{\delta q^r}^*)$ satisfies \eqref{eq:KKT} for $\lambda^*=\omega_0^*$.

Finally, feasibility follows by definition of $\Delta P$ since
\begin{align*}
\sum_{i=1}^n {\delta q_i^g}^*+{\delta q_i^r}^*&= -(\sum_{i=1}^n{R_i^g}^{-1}+\sum_{i\in DC\cup VI} {R_i^r}^{-1})\omega_0^*\\
&=\sum_{i=1}^n p_i^{in}+{q_i^r}^*-{D_i}\omega_0^*=:\Delta P
\end{align*}
where the second steps follows from \eqref{eq:omega_0}.
Thus (${\delta q_i^g}^*,{\delta q_i^r}^*$) is feasible and satisfies the KKT conditions. Therefore, (${\delta q_i^g}^*,{\delta q_i^r}^*$) is an optimal allocation. Since the optimal solution is unique, we must have
$({\delta q_i^g}^*,{\delta q_i^r}^*)=(\hat {\delta q_i^g}^*,\hat {\delta q_i^r}^*)$.
\end{proof}

\begin{remark}
Theorem \ref{thm:droop-control-optimality} illustrates the versatility of droop control and how it can be used to optimally accommodate supply-demand imbalances. However, it also highlights the need to tune parameters that have a direct effect on the network dynamics in order to achieve optimal steady state performance.
\end{remark}

\begin{remark}
The use of quadratic costs is standard in the power system literature~\cite{Kirschen:2004ja}. However, since this paper focuses mostly on local analysis, the analysis can be extended to include nonlinear costs by substituting $\alpha_i^{z}$ with $\frac{\partial}{\partial x}c_i^{z}(x)|_{x=\delta {q_i^{z}}^*}$ where $\delta {q_i^{z}}^*$ denotes the equilibrium value of $q_i^{z}$ ($z$ refers to either $g$ or $r$).
\end{remark}

\subsection{Dynamic Performance}\label{ssec:dyn-performance}
We now focus our attention on the dynamic performance metric. In general, there are several metrics that can be considered~\cite{Miller:2011tm}, and the change of droop control coefficients can have direct effect in all of them. Here we focus $\mathcal H_2$ norm of the system when the output is the frequency vector $\omega$ and the system is being driven by stochastic white noise.

The main motivation of this particular choice of metric is the need to characterize the effect of the increasing generation volatility as well as the effect of the measurement noise. We argue that while generator-based droop control can be usually implemented without major measurement noise, implementing droop control on inverters that are distributed all over the transmission and distribution network needs to account for these errors. Thus the use of $\mathcal H_2$-norm seems natural as it already accounts for stochastic inputs.

We assume that the net power injection of each bus is given by $P_i^{in} + k_i^1w_i^1(t)$, where $w_1(t)=(w_i^1(t),i\in N)$ is a vector of uncorrelated stochastic white noise with unit variance ($E[w_1^{T}(\tau)w_1(t)]=\delta(t-\tau)I_n$) that represents demand fluctuations.  
Similarly, we assume that for the purposes of implementing the droop control on the inverter the measured frequency is given by $\omega_i(t) + k_i^2w_i^2(t)$ where again $w_2(t)=(w_i^2(t),i\in N)$ is such that $E[w_2^{T}(\tau)w_2(t)]=\delta(t-\tau)I_n$. Finally, since to estimate $\dot\omega_i(t)$ one needs to obtain first $\omega_i$, we assume that the measurement value of the frequency derivative is  $\dot \omega_i + k_i^3w_i^3$, where $w_3=(w_i^3:=\frac{\text{d}}{\text{d}t}w_i^2,\,i\in N)$.\footnote{We use here the notation $\frac{\text{d}}{\text{d}t}w_i^2$ to represent the frequency weighted noise process with weight function given by $\text{w}_i(\bi 2\pi f)=\bi2\pi f$, see Remark \ref{rem:1} for more details. }

%

Without loss of generality we make the following change of variables 
\begin{equation}\label{eq:change-variable}
\delta\theta(t) = \theta(t)-\omega_0^*t,\text{ and } \delta\omega(t) = \omega(t) - \omega_0^*.
\end{equation}
Thus, defining the system output to be $y(t)=\delta\omega(t)$ we can combine \eqref{eq:system} together with \eqref{eq:qr} to get the following MIMO system
\begin{subequations}\label{eq:mimo}
\begin{align}
\begin{bmatrix}
I \!\!&\!\! 0 \\
0\!\!&\!\!\hat M
\end{bmatrix}\!\!
\begin{bmatrix}
\dot {\delta\theta}\\
\dot  {\delta\omega}
\end{bmatrix}
\!\!&=\!\!
\begin{bmatrix}
0 \!\!&\!\! I\\
- L\!\!&\!\! -\hat D
\end{bmatrix}\!\!\!
\begin{bmatrix}
 \delta \theta\\
 \delta \omega
\end{bmatrix}
\!\!+\!\!
\begin{bmatrix}
0 \!\!&\!\! 0 \!\!&\!\!0\\
K_1 \!\!&\!\! -{R^r}^{-1}K_2    \!\!&\!\! -M^vK_3 
\end{bmatrix}\!\!\!
\begin{bmatrix}
w_1\\
w_2\\
w_3
\end{bmatrix}\\
y &= \begin{bmatrix} 0 &I \end{bmatrix}\begin{bmatrix}  \delta\theta \\  \delta\omega\end{bmatrix}
\end{align}
\end{subequations}
where $M^v=\diag(M^v_i)$, $K_x = \diag(k_i^x)$ and $w_x=(w_i^x)$, with $x\in\{1,\,2,\,3\}$,  $\hat M=M+M^v$, $\hat D=D+\hat R^{-1}$ and $\hat R^{-1}={R^{g}}^{-1}+{R^{r}}^{-1}$.

Notice that for simplicity the system \eqref{eq:mimo} implicitly assumes that all the inverters in the system are operated using the virtual inertia operation mode. However, it is possible to model using \eqref{eq:mimo} the droop controlled mode by setting $M^v_i=k_i^3=0$. Moreover, if one wants to model the constant power mode one just needs to additionally set $\frac{1}{R_i^r}=0$.
To simplify the discussion we will only consider two cases: 1) All the  inverters implement droop control ($DC=N$); 2) All the inverters implement the virtual inertia control ($VI=N$).

It will also be useful to write \eqref{eq:mimo} in standard form 
\begin{align}\label{eq:mimo-sf}
\begin{bmatrix}
\dot  {\delta\theta}\\
\dot  {\delta\omega}
\end{bmatrix}
\!\!&=\!\!
A
\begin{bmatrix}
  \delta\theta\\
  \delta\omega
\end{bmatrix}
\!\!+\!\!
B\begin{bmatrix}
w_1\\
w_2\\
w_3
\end{bmatrix},& 
y &= C\begin{bmatrix}  \delta\theta \\  \delta\omega\end{bmatrix},
\end{align}
where
\begin{align}
A&=
\begin{bmatrix}
0 \!\!&\!\! I\\
- \hat M^{-1}L\!\!&\!\! -\hat M^{-1}\hat D
\end{bmatrix},\;\; C=\begin{bmatrix} 0 & I \end{bmatrix},\text{ and }\\
B &= \begin{bmatrix}
0 \!\!&\!\! 0 \!\!&\!\!0\\
\hat M^{-1}K_1 \!\!&\!\! -\hat M^{-1}{\hat R}^{-1}K_2    \!\!&\!\! -\hat M^{-1}M^vK_3 
\end{bmatrix}.
\end{align}


Let $H$ denote the LTI system \eqref{eq:mimo}. Then the square of the  $\mathcal H_2$ norm of  \eqref{eq:mimo} can be formally defined using 
\begin{flalign}\label{eq:dyn-cost}
&\text{\bf Dyn-Cost:} &  ||H||_{\mathcal H_2}^2 &= \lim_{t\rightarrow\infty} E [y^T(t)y(t)]\qquad\qquad
\end{flalign}
where $y(t)$ is the output of \eqref{eq:mimo} when the input $w(t)=(w^1(t),w^2(t),w^3(t))$ is composed by a white noise process with unit covariance (i.e., $E[w_k(\tau)w_k^T(t)] = \delta(t -\tau)I$ for $k\in\{1,2\}$) and its derivative ($w_3=\frac{d}{dt}w_2$).

The computation of the $\mathcal H_2$ norm has been widely studied in modern control theory. In particular, in the case when $w_2$ and $w_3$ are not correlated processes ($w_3\not=\frac{d}{dt}w_2$), one very useful procedure to compute $||H||_{\mathcal H_2}$ (see ~\cite{Doyle:1989kf}) is based on using 
\begin{equation}\label{eq:H2}
||H||_{\mathcal H_2}^2=\tr (B^TXB)
\end{equation}
where $X$ is the observability Grammian, i.e. $X$ solves the Lyapunov equation 
\begin{equation}\label{eq:lyapunov}
A^TX+XA=-C^TC.
\end{equation}

In the context of power systems the use of this methodology has been first used in \cite{Tegling:2015ef}, where the authors seek to compute the power losses incurred by the network in the process of resynchronizing   generators after a disturbance.  Since then, several works have used similar metrics to evaluate effect of controllers on the power system performance, see e.g. \cite{Poolla:2015vq,Tegling:2016wna}.

\begin{remark}\label{rem:1}
It is important to notice that in our case the system is driven also by the derivative of the noise process $w_2$. Thus, \eqref{eq:H2} can only be applied when  $K_3=0$. When $K_3\not=0$, then \eqref{eq:dyn-cost} corresponds to a frequency weighted $\mathcal H_2$-norm. More precisely, then noise process $w_3$ is a frequency weighted process with weight function given by $W(s)=sI$, i.e. $\hat w_3(s) = s\hat w_2(s)$ where $\hat w_k(s)$ is the Laplace Transform of $w_k(t)$.
\end{remark}

The next theorem shows that droop controlled inverters can indeed affect the performance by changing droop parameters, and that inverters implementing virtual inertia can drastically degrade the performance when measurement noise is considered.
\begin{theorem}[$\mathcal H_2$-norm Computation]\label{thm:dyn-cost}
  Assume homogeneous parameter values, i.e. $M_i=m$, $M^v_i=m^v$, $D_i=d$, $R_i^g= r^g$, $R_i^r=r^r$, $k_i^1=k_1$, $k_i^2=k_2$ and $k_i^3=k_3$. Let 
  \begin{equation}\label{eq:notation}
  \hat r^{-1} = {r^g}^{-1}+{r^r}^{-1},\;\hat d= d+\hat r^{-1}, 
    \end{equation}
  and let $H_{DC}$ and $H_{VI}$ denote the MIMO system \eqref{eq:mimo} when all inverters implement droop control and virtual inertia, respectively.
  
 Then the squared $\mathcal H_2$ norm of $H_{DC}$ and $H_{VI}$ is given by 
\begin{align}\label{eq:h2-dc}
||H_{DC}||_{\mathcal H_2}^2 \!&=\!\frac{n\left((k_1)^2+(k_2{r^r}^{-1})^2 \right)}{2m(d+{r^g}^{-1}+{r^r}^{-1}) },
\intertext{ and }
||H_{VI}||_{\mathcal H_2}^2 \!&=\!+\infty,\label{eq:h2-vi}
\end{align}
respectively.
\end{theorem}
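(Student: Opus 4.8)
The plan is to exploit the homogeneity of the parameters to diagonalize the system modally, thereby reducing the $\mathcal H_2$ computation to $n$ scalar second-order problems. Since every coefficient matrix in \eqref{eq:mimo-sf} other than $L$ is a scalar multiple of the identity, I would introduce the spectral decomposition $L=U\Lambda U^T$ with $U$ orthogonal and $\Lambda=\diag(\lambda_1,\dots,\lambda_n)$, $\lambda_1=0$ (with $\lambda_1$ associated to the $\mathbf 1_n$ direction). Applying the coordinate change $\tilde\theta=U^T\delta\theta$, $\tilde\omega=U^T\delta\omega$ decouples the dynamics into $n$ independent modes. Because $U$ is orthogonal, the rotated noises $U^Tw_k$ remain unit-variance white noise and the output power is preserved, $y^Ty=\tilde\omega^T\tilde\omega$, so $\|H\|_{\mathcal H_2}^2=\sum_{k=1}^n\|H_k\|_{\mathcal H_2}^2$, where $H_k$ is the transfer function from the modal noise to $\tilde\omega_k$ of the second-order system with denominator $s^2+\hat m^{-1}\hat d\,s+\hat m^{-1}\lambda_k$.

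The key structural observation I would record next is that the output is the frequency $\tilde\omega_k=s\tilde\theta_k$, so each modal transfer function carries a factor of $s$ in its numerator. For the mode $\lambda_1=0$ this $s$ cancels the pole the denominator places at the origin: although $A$ is only marginally stable (the $\mathbf 1_n$ direction of $\theta$ is a free integrator), that mode is \emph{unobservable} from $\omega$ and the reduced transfer function is stable. This is precisely why the Grammian formula \eqref{eq:H2}--\eqref{eq:lyapunov} cannot be applied verbatim, and I would instead evaluate each $\|H_k\|_{\mathcal H_2}^2$ directly in the frequency domain as $\tfrac1{2\pi}\int_{-\infty}^{\infty}|H_k(\bi\nu)|^2\,d\nu$ along the imaginary axis $s=\bi\nu$ (equivalently, restricting the Grammian computation to the observable quotient on which $A$ is Hurwitz).

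For $H_{DC}$ we have $M^v=0$, $\hat m=m$, and the $w_3$ channel is inactive, so the $w_1$ and $w_2$ inputs are uncorrelated and their squared norms add. Each contribution has the form $\beta s/(s^2+bs+a)$ with $b=\hat d/m$, $a=\lambda_k/m$, and gain $\beta\in\{k_1/m,\,{r^r}^{-1}k_2/m\}$, the latter coefficient being ${r^r}^{-1}k_2$ (as in \eqref{eq:mimo}), which is what produces the $(k_2{r^r}^{-1})^2$ in the stated formula. The standard second-order integral gives $\|\beta s/(s^2+bs+a)\|_{\mathcal H_2}^2=\beta^2/(2b)$, which is \emph{independent of $a$}, hence of $\lambda_k$. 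Every mode therefore contributes the identical amount $(k_1^2+({r^r}^{-1}k_2)^2)/(2m\hat d)$, and summing the $n$ equal terms yields \eqref{eq:h2-dc}.

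For $H_{VI}$ the decisive point is that $w_3=\tfrac{d}{dt}w_2$ is perfectly correlated with $w_2$, so, as in Remark \ref{rem:1}, the $w_3$ path must be folded into the $w_2$ channel through the weight $W(s)=sI$ (coherent addition of transfer functions rather than addition of squared norms). The effective modal transfer function from the underlying white noise $w_2$ then becomes $-\hat m^{-1}s\bigl({r^r}^{-1}k_2+m^vk_3\,s\bigr)/(s^2+\hat m^{-1}\hat d\,s+\hat m^{-1}\lambda_k)$, whose numerator and denominator have equal degree. Since $|H_k(\bi\nu)|\to m^vk_3/\hat m\neq0$ as $\nu\to\infty$, the integrand of $\tfrac1{2\pi}\int|H_k(\bi\nu)|^2\,d\nu$ does not decay and the integral diverges; a single such mode already forces $\|H_{VI}\|_{\mathcal H_2}^2=+\infty$, giving \eqref{eq:h2-vi}. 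I expect this coherent treatment of the correlated derivative noise---equivalently, recognizing that virtual inertia injects a non-strictly-proper (biproper) response and hence an effective direct feedthrough---to be the main conceptual obstacle, whereas the modal reduction and the second-order integral are routine.
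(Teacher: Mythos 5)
Your proposal is correct and follows essentially the same route as the paper: modal diagonalization of $L_B$ by an orthogonal $U$, reduction to $n$ identical scalar second-order $\mathcal H_2$ computations for the droop case, and coherent folding of the correlated derivative noise $w_3=\tfrac{d}{dt}w_2$ into the $w_2$ channel to exhibit a nonzero feedthrough (biproper response) that makes $\|H_{VI}\|_{\mathcal H_2}^2$ diverge. If anything, you are slightly more careful than the paper in noting that the $\lambda_1=0$ mode is unobservable from $\omega$ (so the marginal pole cancels and the per-mode norm is still $\beta^2/(2b)$, independent of $\lambda_k$), a point the paper's Grammian-based computation passes over silently.
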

\begin{proof}
The proof of this theorem is analogous to \cite[Lemmas 3.1 and 3.2]{Tegling:2015ef}. We study the two cases \eqref{eq:h2-dc} and \eqref{eq:h2-vi} separately.

\noindent
{\bf Computing $||H_{DC}||_{\mathcal H_2}^2$:} 
Notice that in this case $M^v=0$. In order to compute $||H_{DC}||_{\mathcal H_2}^2$ we first make the same change of variable used in \cite{Tegling:2015ef},
\[
 \delta\theta = U\theta' \text{ and } \delta \omega = U\omega',
\]
where $U$ is the orthonormal transformation that diagonalizes $L$, i.e. $U^TL_BU= \Gamma$ where $\Gamma=\diag\{\lambda_1=0,\dots,\lambda_n\}$ and $U$ is assumed w.l.o.g. to be of the form $U=[\frac{1}{\sqrt{n}}\mathbf 1_n \,|\,U_{n-1}]$.

Thus, if we further transform $y=Uy'$, $w_1=Uw_1'$ and $w_2=Uw_2'$, 
we can decouple \eqref{eq:mimo-sf} into $n$ subsystems given by 
\begin{flalign}\label{eq:decoupled-mimo}
&\text{${H_{DC,i}}$:}\!\!\!\! & \!\!\!\!\!\!
\begin{matrix}
\begin{array}{cl}
\begin{bmatrix}
\dot \theta_i'\\
\dot \omega_i'
\end{bmatrix}
\!\!&=\!\!
\bbm 0 \!&\! 1\\ -\frac{\lambda_i}{m} \!&\! -\frac{\hat d}{m} \ebm
\begin{bmatrix}
 \theta'\\
 \omega'
\end{bmatrix}
\!\!+\!\!
\bbm
0 \!&\! 0\\
\frac{k_1}{m} \!&\!-\frac{k_2}{mr^r}
\ebm
\begin{bmatrix}
{w^1_i}^{'}\\
{w^2_i}^{'}
\end{bmatrix} \\ 
y_i' &= \bbm 0 &1\ebm \begin{bmatrix} \theta_i' \\ \omega_i'\end{bmatrix}
\end{array},
\end{matrix}
\end{flalign}
where a simple computation using \eqref{eq:H2} and \eqref{eq:lyapunov} shows that 
$$||H_{DC,i}||_{\mathcal H_2}^2=\frac{\left((k_1)^2+(k_2{r^r}^{-1})^2 \right)}{2m(d+{r^g}^{-1}+{r^r}^{-1}) }.
$$
Therefore, since $||H_{DC}||_{\mathcal H_2}^2 = \sum_{i=1}^n||H_{DC,i}||_{\mathcal H_2}^2$ we obtain \eqref{eq:h2-dc}.

\noindent
{\bf Computing $||H_{VI}||_{\mathcal H_2}^2$:} 
To show that the norm $||H_{VI}||_{\mathcal H_2}^2=+\infty$ we will show that the transfer function of $H_{VI}$ has nonzero feedthrough ($\sigma_{\max}(H(\bi\infty))\geq \varepsilon>0$).

To compute the transfer function we first notice that $w_3=\dot w_2$ which implies that we can model in the Laplace domain $\hat w_3(s) = s\hat w_2(s)$. 

Similar to the DC case, we can use a change of variable to decouple the system into $n$ different modes given by
\begin{align*}
\begin{bmatrix}
\dot \theta_i'\\
\dot \omega_i'
\end{bmatrix}
\!\!&=\!\!
A_i\begin{bmatrix}
 \theta'\\
 \omega'
\end{bmatrix}
\!\!+\!\!
B_i\begin{bmatrix}
{w^1_i}^{'}\\
{w^2_i}^{'}\\
{w^3_i}^{'}
\end{bmatrix},\;&
y_i' &= C_i \begin{bmatrix} \theta_i' \\ \omega_i'\end{bmatrix}
\end{align*}\vspace{-.1in}
\begin{flalign*}
&\text{ with }&&A_i=\bbm 0 \!&\! 1\\ -\frac{\lambda_i}{\hat m} \!&\! -\frac{\hat d}{\hat m} \ebm,\;
B_i=\bbm
0 \!&\! 0\!&\! 0\\
\frac{k_1}{\hat m} \!&\!-\frac{k_2}{\hat mr^r}&\!-\frac{k_3m^v}{\hat m}
\ebm\\
&&&C_i=\bbm 0 & 1\ebm
\end{flalign*}
where $\hat m =m+m^v$.

We drop the subscript $i$ and define $\hat w'(s) = [\hat w'_1(s)\,\hat w'_2(s)\,\hat w'_3(s)]^T$. Thus we  can compute the transfer function $H(s)$ using
\begin{align*}
&\hat y(s)=H(s)w'(s)= C(sI-A)^{-1}B \hat w'(s)\\
&=\frac{1}{\Delta(s)}C\bbm s +\frac{\hat d}{\hat m}\!&\! 1\\ -\frac{\lambda}{\hat m} \!&\! s\ebm B \hat w'(s)\\
&=\frac{1}{\Delta(s)}\bbm  -\frac{\lambda}{\hat m}\!&\! s\ebm B \hat w'(s)\\
&=\frac{s}{\Delta(s)} 
\bbm
\frac{k_1}{\hat m} \!&\!-\frac{k_2}{\hat mr^r}&\!-\frac{k_3m^v}{\hat m}
\ebm \hat w'(s)\\
&=\frac{s}{\Delta(s)} 
\bbm
\frac{k_1}{\hat m} \!&\!-\frac{k_2}{\hat mr^r}&\!-\frac{k_3m^v}{\hat m}
\ebm  \bbm \hat w_1'(s) \\ \hat w_2'(s)\\\hat w_3'(s)\ebm\\
&=\frac{s}{s^2+\frac{\hat d}{\hat m}s + \frac{\lambda}{\hat m}} 
\bbm
\frac{k_1}{\hat m} \!&\!-\left(s\frac{k_3m^v}{\hat m}+ \frac{k_2}{\hat mr^r}\right)
\ebm  \bbm \hat w_1'(s) \\ \hat w_2'(s)\ebm
\end{align*}
where $\Delta(s)=s^2+\frac{\hat d}{\hat m}s + \frac{\lambda}{\hat m}$ and in the last step we used the relationship $\hat w_3'(s) = s\hat w_2'(s)$.
It follows that when $f \rightarrow +\infty$,
\[
H(\bi 2\pi f)\rightarrow \bbm 0& -\frac{k_3m^v}{\hat m}\ebm,
\]
which implies that $||H_{VI}||_{\mathcal H_2}^2=+\infty$.
\end{proof}

Theorem \ref{thm:dyn-cost}  provides an interesting insight on how the different controllers described in Section \ref{sec:model} affect the system performance and illustrates the effect of measurement errors on it. To understand the performance changes, we provide the $\mathcal H_2$ norm of the swing equations ($H_{SW}$) without any additional control ($r^r=m^v=k_2=k_3=0$)
\begin{equation}
||H_{SW}||_{\mathcal H_2}^2 = \frac{n}{2m(d+{r^g}^{-1})}(k_1)^2.
\end{equation}
Thus it is easy to see that adding droop control introduces a larger damping $(d+{r^g}^{-1}+{r^r}^{-1})>d+{r^g}^{-1}$. However, there is an intrinsic tradeoff since if $r^r$ is small enough, then the noise amplification takes over and increases the norm. 
But perhaps the most interesting result from this theorem is \eqref{eq:h2-vi}, which implies that the massive use of virtual inertia in the network can amplify the stochasticity of the system and introduce huge volatility in the frequency fluctuations.

\subsection{The Need for a Better Solution}

The analysis provided in sections \ref{ssec:ss-performance} and \ref{ssec:dyn-performance} shows that none of the existing solutions can simultaneously achieve efficient steady state operation while improving dynamic performance. 

On the one hand, while Theorem {\ref{thm:droop-control-optimality} shows indeed that droop control can be used to optimally allocate resources by carefully setting the parameters $R^r_i=\alpha_i^r$ and $R^g_i=\alpha_i^g$, this tie between control parameters and economic efficiency makes it impossible to further improve the dynamic performance without incurring on an additional steady state cost \eqref{eq:ss-cost}. Therefore, if one wants to operate the system in an efficient steady state, then $R^r_i$ cannot be used to improve the dynamic performance.

On the other hand, while the use of virtual inertia has been suggested to be a viable solution to improve the dynamic performance without losing steady state efficiency, Theorem \ref{thm:dyn-cost} shows that this solution cannot be widely adopted since it will induce a large noise amplification that can hinder the secure operation of the power grid.

As a result, all the existing solutions for inverter control  cannot provide a dynamic performance improvement without sacrificing either steady-state  or dynamic performance.

\section{Dynam-{i}-c {Droop} Control (iDroop)}\label{sec:idroop}
We now introduce our iDroop control. The main underlying idea on the design of the proposed controller is to leverage the flexibility that virtual inertia controllers provide while controlling the noise amplification using a filtering stage. 

Let $ID\subset N$ be the set of network buses that implement iDroop. Then we propose to control the  power injected to the grid by the inverter using 
\begin{flalign}\label{eq:idroop}
&\text{{\bf iDroop:}}&
\begin{matrix}
\begin{array}{rl}
q_i \!\!&= q_i^{r,0} + x_i\\
\dot x_i \!\!&=  \delta_i\left(-\frac{1}{R^r_i}\omega_i - x_i \right)-\nu_i\dot\omega_i
\end{array}
\end{matrix}\qquad\qquad
\end{flalign}

A few comments are in order. Firstly, the parameter $\nu_i$ place a role similar to $M^v_i$ in the virtual inertia controller. However, since this term is integrated in \eqref{eq:idroop}, it is not longer interpreted as a virtual inertia. Secondly, the parameters $\delta_i$ and $\nu_i$ can be  independently tuned to reduce the  noise introduced by the frequency ($k_i^2w^2_i$) or the  frequency derivative ($k_i^3w^3_i=k_i^3\dot w^2_i$).
This can be easily seen when these noise processes are introduced in \eqref{eq:idroop} giving
\begin{equation}
\dot x_i = \delta_i\left(-\frac{1}{R^r_i}\omega_i - x_i \right)-\nu_i\dot\omega_i - \frac{\delta_ik_i^2}{R_i^r}w_i^2 - \nu_ik_i^3\dot w_i^2.\label{eq:idroop-b-noise}
\end{equation}
Finally, while the stability (in the absence of noise) of \eqref{eq:system} with \eqref{eq:qr} is trivially guaranteed, the additional integration stage requires a more detailed analysis.

We do not provide here an explicit formula for $||H_{iDroop}||_{\mathcal H_2}$ in this paper and leave it for future research. Instead we will numerically illustrate in the next section the effect of $\nu_i$ and $\delta_i$ on this norm and how the performance of iDroop compares with the one of droop controlled inverters.
In the rest of this section we show that indeed our controllers are able preserve the steady state behavior for arbitrary parameter values $\delta_i$ and $\nu_i$, and characterize a sufficient condition for asymptotic stability.

\subsection{Steady State Optimality}
We now show that our iDroop controllers  provide the same steady state properties as traditional droop control. We do this by showing that iDroop achieves the minimum  {\bf SS-Cost} \eqref{eq:ss-cost}.

\begin{theorem}[iDroop Optimality]\label{thm:idroop-optimality}
Let $(\theta^*(t),\allowbreak\omega^*\!\!,\allowbreak{q^r}^*)$ be the steady state solution of \eqref{eq:system} where $q_i^r$ is given by \eqref{eq:idroop}. Then the steady state solution of \eqref{eq:system} and \eqref{eq:idroop} is given by 
$$ \theta(t)^* \!\!= \theta^* + \mathbf 1_n\omega_0^*t,\,\, \omega_i^* = \omega_0^*\,\,\forall i \text{ and }q_i^*-q_i^{r,0}= x_i^*=-\frac{\omega_0^*}{R_i^r}{}.$$
Moreover, if $R_i^r=\alpha_i^r$ and $R_i^g=\alpha_i^g$, then $\delta {q_i^g}^* := -\frac{1}{R_i^g}\omega_0^*$, $\delta {q_i^r}^* := {q_i^r}^*-q_i^{r,0}$ is the unique allocation  that minimizes the steady state cost \eqref{eq:ss-cost} subject to \eqref{eq:supply-demand-balance}, where $\Delta P=\sum_{i=1}^n p_i^{in}+q_i^{r,0} - D_i\omega_0^*$.
\end{theorem}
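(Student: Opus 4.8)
The plan is to split the argument into the same two parts as the statement. First I would pin down the steady state of the closed loop \eqref{eq:system}--\eqref{eq:idroop}, showing that the integral action in iDroop reproduces \emph{exactly} the steady-state power injection of ordinary droop control; second, having matched the two steady states, I would invoke Theorem \ref{thm:droop-control-optimality} directly for optimality, since the cost \eqref{eq:ss-cost} and the balance constraint \eqref{eq:supply-demand-balance} depend only on the equilibrium deviations $(\delta q_i^{g*},\delta q_i^{r*})$ and not on the transient filter parameters $\delta_i,\nu_i$.

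For the first part I would augment the state with the controller variables $x_i$ and look for an equilibrium of the form $\theta(t)=\theta^*+\mathbf 1_n\omega_0^* t$, $\omega=\mathbf 1_n\omega_0^*$, $x=x^*$. Synchronization at a common frequency is inherited from the swing dynamics exactly as in Section \ref{sec:model}: since $L_B\theta^*(t)$ must remain constant, $\omega^*=\mathbf 1_n\omega_0^*$. The new ingredient is the filter equation. At steady state $\dot\omega_i=0$, so the term $-\nu_i\dot\omega_i$ drops out, and setting $\dot x_i=0$ in \eqref{eq:idroop} forces $x_i^*=-\omega_0^*/R_i^r$ whenever $\delta_i\neq 0$. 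Hence $q_i^*-q_i^{r,0}=x_i^*=-\omega_0^*/R_i^r$, which is precisely the droop steady-state injection.

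It then remains to verify that the scalar $\omega_0^*$ is the one in \eqref{eq:omega_0}. Substituting $q_i^*=q_i^{r,0}-\omega_0^*/R_i^r$ into the steady-state version of \eqref{eq:system-b}, summing over all buses, and using $\mathbf 1_n^T L_B=0$ to annihilate the network term, I would obtain a single scalar balance whose solution is exactly \eqref{eq:omega_0} with $DC\cup VI$ replaced by $ID$. This establishes the first displayed claim of the theorem.

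For the second part the work is essentially done: the deviations $\delta q_i^{g*}=-\omega_0^*/R_i^g$ and $\delta q_i^{r*}=-\omega_0^*/R_i^r$ are identical to those produced by droop control, and the definition of $\Delta P$ coincides, so the KKT argument of Theorem \ref{thm:droop-control-optimality} applies verbatim: under $R_i^r=\alpha_i^r$ and $R_i^g=\alpha_i^g$ these deviations satisfy the optimality conditions \eqref{eq:KKT} with multiplier $\lambda^*=\omega_0^*$ together with the balance constraint, so by uniqueness of the strictly convex program they are the unique minimizer. The one real subtlety I would flag is confirming that the decoupling genuinely holds, i.e.\ that $\delta_i,\nu_i$ affect only the transient and not the equilibrium; this hinges entirely on $\dot\omega_i$ vanishing in steady state and on the $x_i$-dynamics being integral in frequency, so that no residual dependence on the tuning parameters survives (with the mild proviso $\delta_i\neq 0$, which is the natural requirement for the lag filter to settle).
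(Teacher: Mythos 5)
Your proposal is correct and follows essentially the same route as the paper: characterize the closed-loop equilibrium, use $\dot\omega_i=0$ and $\dot x_i=0$ in \eqref{eq:idroop} to force $x_i^*=-\omega_0^*/R_i^r$ so that the steady-state deviations coincide with those of ordinary droop control, and then invoke Theorem \ref{thm:droop-control-optimality} for optimality. Your extra checks (explicitly re-deriving $\omega_0^*$ by summing the steady-state balance, and the proviso $\delta_i\neq 0$) are sensible refinements that the paper leaves implicit.
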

\begin{proof}
The proof of this theorem relies on Theorem \ref{thm:droop-control-optimality}. We will show that iDroop has a steady state behavior that is identical to the standard droop control when $DC\cup VI=ID$. Thus, we can use then use Theorem \ref{thm:droop-control-optimality} to show that indeed iDroop preserves the optimality characteristics of the traditional droop control.

We now characterize the steady state behavior of \eqref{eq:system} and \eqref{eq:idroop}. 
Similarly to  \eqref{eq:system} and \eqref{eq:qr}, we can use \eqref{eq:system-b} to show that $\dot \omega=0$ only if $\theta(t)=\theta^*(t) := \theta^*+\mathbf 1_n\omega_0^*t$.  Using \eqref{eq:idroop} with $\dot x=0$ and $\dot \omega=0$, we get that $R_i^rx_i^* = -\omega_i^*=-\omega_0^*$. Therefore, the steady state deviation of the inverter $i$ is given by $$\delta {q_i^r}^*=q_i^*-q_i^{r,0} = x_i^*=-\frac{1}{R_i^r}\omega_0^*.$$

Since the steady state deviation of conventional generators is by definition ${\delta q_i^g}^*=-\frac{1}{R_i^g}\omega_0^*$, then we have obtained the same allocation described in Theorem \ref{thm:droop-control-optimality}. It follows then that when $\alpha_i^r=R_i^r$ and $\alpha_i^g=R_i^g$, the steady state solution of \eqref{eq:idroop} is an allocation that minimizes \eqref{eq:ss-cost} subject to \eqref{eq:supply-demand-balance}.
\end{proof}

\subsection{Stability Analysis}

We now show that the controllers described in \eqref{eq:idroop} can preserve the stability of the network. 

We use the same change of variable used in \eqref{eq:change-variable} together with $ \delta x(t) = x(t) - x^*$.
%
Therefore \eqref{eq:system} and \eqref{eq:idroop} become
\begin{subequations}\label{eq:system-idroop}
\begin{align}
\dot{ \delta \theta} &=  \delta\omega\label{eq:system-idroop-a}\\
M\dot{  \delta\omega} & = - (D + {R^g}^{-1}) \delta\omega -L_B  \delta\theta + \delta x\label{eq:system-idroop-b}\\
\dot{ \delta x}&= -K_\delta({R^r}^{-1} \delta\omega +   \delta x) -K_\nu\dot{ \delta\omega}\label{eq:system-idroop-c}
\end{align}
\end{subequations}
where $K_\nu=\diag(\nu_i,i\in N)$.
\footnote{To simplify notation we assume here that every bus iDroop. However, the results can be generalized for any combination of iDroop with the controllers in \eqref{eq:qr}.}
It is easy to see that now the steady state solutions of Theorem \ref{thm:idroop-optimality} become
$( \delta\theta^*, \delta\omega^*, \delta x^*)=(\mathbf 1_n\alpha,0,0)$ for any $\alpha\in\mathds R$. Thus the set of equilibria of \eqref{eq:system-idroop} is given by
\begin{equation}\label{eq:equilibria}
\hat {\mathcal E}=\{( \delta\theta, \delta\omega, \delta x):  \delta\omega= \delta x=0,\, \delta\theta=\mathbf 1_n\alpha,\,\alpha\in\mathds R\}.
\end{equation}

\begin{theorem}[Asymptotic Convergence]
Whenever the following condition holds,
\begin{equation}\label{eq:condition}
\frac{\nu_i}{\delta_i(\nu_i\!+\!{R_i^r}^{-1})}>0\;\text{ and }\;(D_i\!+\!{R_i^g}^{-1})+\frac{\nu_i{R_i^r}^{-1}}{\nu_i+{R_i^r}^{-1}}>0,
\end{equation}
the iDroop control \eqref{eq:idroop} converges asymptotically to the set of equilibria $\hat {\mathcal E}$ described in \eqref{eq:equilibria}.
\end{theorem}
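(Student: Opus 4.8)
The plan is to establish convergence through LaSalle's invariance principle, built on a quadratic energy--storage Lyapunov function for the closed loop \eqref{eq:system-idroop}. The first thing to confront is the derivative feedback $-K_\nu\dot{\delta\omega}$ in \eqref{eq:system-idroop-c}: since $\dot{\delta\omega}=M^{-1}(-(D+{R^g}^{-1})\delta\omega-L_B\delta\theta+\delta x)$ carries the network term $-M^{-1}L_B\delta\theta$, any naive energy function acquires Laplacian cross terms between $\delta\theta$ and $\delta x$ that neither cancel nor respect the node-wise structure we are after. I would remove this feedthrough with the change of variable $\xi:=\delta x+K_\nu\delta\omega$, under which \eqref{eq:system-idroop} becomes $\dot{\delta\theta}=\delta\omega$, $M\dot{\delta\omega}=-(D+{R^g}^{-1}+K_\nu)\delta\omega-L_B\delta\theta+\xi$, and $\dot\xi=-K_\delta\xi-K_\delta({R^r}^{-1}-K_\nu)\delta\omega$. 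The crucial gain is that $\dot\xi$ no longer depends on $\dot{\delta\omega}$, hence not on $L_B\delta\theta$.

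Next I would take the Lyapunov function $V=\tfrac12\delta\theta^{T}L_B\delta\theta+\tfrac12\delta\omega^{T}M\delta\omega+\tfrac12\xi^{T}P\xi$ with the diagonal weight $P=\diag\!\big(1/(\delta_i(\nu_i+{R_i^r}^{-1}))\big)$, a choice made precisely so that the per-node dissipation collapses onto the parallel combination appearing in \eqref{eq:condition}. Because $L_B$ is symmetric the indefinite terms $\pm\delta\theta^{T}L_B\delta\omega$ cancel, and since $\dot\xi$ is free of $L_B\delta\theta$ the function $V$ carries no cross term whose derivative could reintroduce the network. A direct computation then yields a sum of decoupled per-node quadratic forms,
\[
\dot V \;=\; -\sum_{i\in N}\Big[\,(D_i+{R_i^g}^{-1}+\nu_i)\,\delta\omega_i^2 \;+\; \tfrac{1}{\nu_i+{R_i^r}^{-1}}\,\xi_i^2 \;-\; \tfrac{2\nu_i}{\nu_i+{R_i^r}^{-1}}\,\delta\omega_i\,\xi_i\,\Big],
\]
one in each pair $(\delta\omega_i,\xi_i)$.

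The two inequalities in \eqref{eq:condition} then serve exactly as positivity of $V$ and of $-\dot V$. The first, $\nu_i/(\delta_i(\nu_i+{R_i^r}^{-1}))>0$, fixes the sign of the storage weight so that $P$ is positive definite and hence $V$ is positive definite modulo the kernel direction $\delta\theta\in\Span\{\mathbf 1_n\}$ (the quadratic form in $(\delta\omega,\delta x)$ has Schur complement $M\succ0$, so it is positive definite as soon as $P\succ0$). The second is the determinant (Schur--complement) condition for the per-node $2\times2$ form above to be positive semidefinite: its off-diagonal entry $\nu_i/(\nu_i+{R_i^r}^{-1})$ produces the parallel term $\nu_i{R_i^r}^{-1}/(\nu_i+{R_i^r}^{-1})$, so that nonnegativity of the determinant is exactly $(D_i+{R_i^g}^{-1})+\nu_i{R_i^r}^{-1}/(\nu_i+{R_i^r}^{-1})\ge0$. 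With $\dot V\le0$ in hand I would close the argument by LaSalle: on $\{\dot V=0\}$ one has $\delta\omega=0$ and $\xi=0$, hence $\delta x=\xi-K_\nu\delta\omega=0$, while invariance forces $\dot{\delta\omega}=0$ and therefore $L_B\delta\theta=0$, i.e. $\delta\theta\in\Ker L_B=\Span\{\mathbf 1_n\}$; the largest invariant set in $\{\dot V=0\}$ is thus exactly $\hat{\mathcal E}$ of \eqref{eq:equilibria}. Since $V$ is radially unbounded on the quotient by $\Span\{\mathbf 1_n\}$ (where $L_B\succ0$), sublevel sets are compact there and LaSalle applies.

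I expect the main obstacle to be engineering the first two steps to work in tandem: finding the change of variable that simultaneously eliminates the derivative feedthrough and preserves a fully decentralized (topology-independent) structure, and then isolating the specific storage weight $1/(\delta_i(\nu_i+{R_i^r}^{-1}))$ that makes the Schur complement reduce to the clean parallel combination of \eqref{eq:condition}; a generic weight only yields a messier and more conservative node-wise inequality. Treating the kernel direction $\Span\{\mathbf 1_n\}$ correctly---passing to the quotient so that $V$ is positive definite and radially unbounded---is a routine but necessary final technicality.
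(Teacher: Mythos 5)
Your proposal is correct and follows essentially the same route as the paper: your $V$ is literally the paper's Lyapunov function (your weight $P$ equals the paper's $T=K_\delta^{-1}(K_\nu+{R^r}^{-1})^{-1}$ and your $\xi$ is the combination $\delta x + K_\nu\delta\omega$ already appearing there), and the closing LaSalle argument is identical. The only difference is bookkeeping --- the paper chooses $T$ to cancel the $(\delta\omega,\delta x)$ cross term outright and reads \eqref{eq:condition} off the resulting diagonal form, whereas you retain a cross term in $(\delta\omega,\xi)$ coordinates and absorb it through the per-node $2\times 2$ determinant, landing on the same condition.
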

\begin{proof}
We will show that under the conditions of the theorem, the following Lyapunov function decreases along trajectories:
\begin{equation}\label{eq:V}
\begin{aligned}
V( \delta\theta, \delta\omega, \delta x)=& \frac{1}{2} \delta \theta^T\!L_B \delta\theta+\frac{1}{2} \delta\omega^TM \delta\omega \\&+ \frac{1}{2}( \delta x+K_\nu \delta\omega)^TT( \delta x+K_\nu \delta\omega).
\end{aligned}
\end{equation}
where $T\in \R^{n\times n}$ is a positive definite diagonal matrix to be defined later ($T\succ 0$ and $T=\diag(v)$, with $v\in \R^n$).
The Lyapunov function $V$ is inspired on \cite{Paganini:2009fk} where a similar derivative term is used to damp oscillations.

The change of $V$ along the trajectories of \eqref{eq:system-idroop} is given by
\begin{align}
\dot V =&\, \delta\theta^TL_B\dot{ \delta\theta}\!+\! \delta\omega^TM\dot{ \delta \omega}\!+\! (\delta x\!+\!K_\nu\delta \omega)^TT(\dot {\delta x}\!+\!K_\nu\dot{\delta  \omega})\label{eq:dotv-1}\\
=&\,\delta \theta^TL_B\delta \omega+ \delta \omega^T(-(D + {R^g}^{-1})\delta \omega - L_B\delta \theta +\delta x) \nonumber\\
&+(\delta x+K_\nu\delta \omega)^TTK_\delta(-{R^r}^{-1}\delta \omega -\delta x)\label{eq:dotv-2}\\
=&\,-\delta \omega^T(D \!+\! {R^g}^{-1} \!+\! K_\nu TK_\delta {R^r}^{-1})\,\delta \omega \!-\!\delta x^TTK_\nu \delta x\nonumber\\
&+\delta \omega^T(I-K_\nu TK_\delta\!-\!TK_\delta{R^r}^{-1})\delta x\label{eq:dotv-3}
\end{align}
where \eqref{eq:dotv-2} follows from \eqref{eq:system-idroop}, and \eqref{eq:dotv-3} from rearranging the terms.

We now choose $T$ so that the cross term in \eqref{eq:dotv-3} is zero. Since $T$ is assumed to be a diagonal matrix, then we get
\begin{align}
0&=I-K_\nu TK_\delta\!-\!TK_\delta{R^r}^{-1}  &\iff\nonumber\\
0&=I-TK_\delta(K_\nu + {R^r}^{-1})& \iff\nonumber\\
T& = {K_\delta}^{-1} (K_\nu + {R^r}^{-1})^{-1}\label{eq:T}
\end{align}

Therefore, using \eqref{eq:T}, it follows that 
\begin{align}
\dot V=& -\delta \omega^T(D \!+\! {R^g}^{-1} \!\!+\! K_\nu {R^r}^{-1}(K_\nu \!+\! {R^r}^{-1})^{-1} )\delta \omega \nonumber\\
&-\delta x^TK_\nu K_\delta^{-1}(K_\nu \!+\! {R^r}^{-1})^{-1} \delta x\label{eq:dotv-4}
\end{align}

Thus, it follows from \eqref{eq:condition} that $\dot V\leq0$ and we can now apply LaSalle's Invariance Principle\cite{khalil2002nonlinear} to show that $(\delta \theta(t),\delta \omega(t),\delta x(t))$ converges to the largest invariant set $M\subset\{\dot V\equiv 0\}$.  Using  \eqref{eq:dotv-4} we can show that  $\dot V\equiv 0$ implies that $\delta \omega(t)\equiv 0$ and $\delta x(t)\equiv 0$, which implies in turn (through \eqref{eq:system-idroop-a}) that $\dot {\delta \theta}\equiv 0$. Therefore, it must follow that $\delta \theta(t)\equiv \delta \theta^*$. Finally, using \eqref{eq:system-idroop-b} we get $0\equiv L_B\delta \theta(t)\equiv L_B\delta \theta^*$ which, since the graph is connected, implies that $\theta^*=\mathbf 1_n\alpha$ for some $\alpha\in\mathds R$. Therefore, every trajectory converges to the set of equilibria $\hat{\mathcal E}$ given in \eqref{eq:equilibria}, i.e. $M\subset \hat{\mathcal E}$.
\end{proof}

\begin{figure}[htp]
\includegraphics[width=\columnwidth]{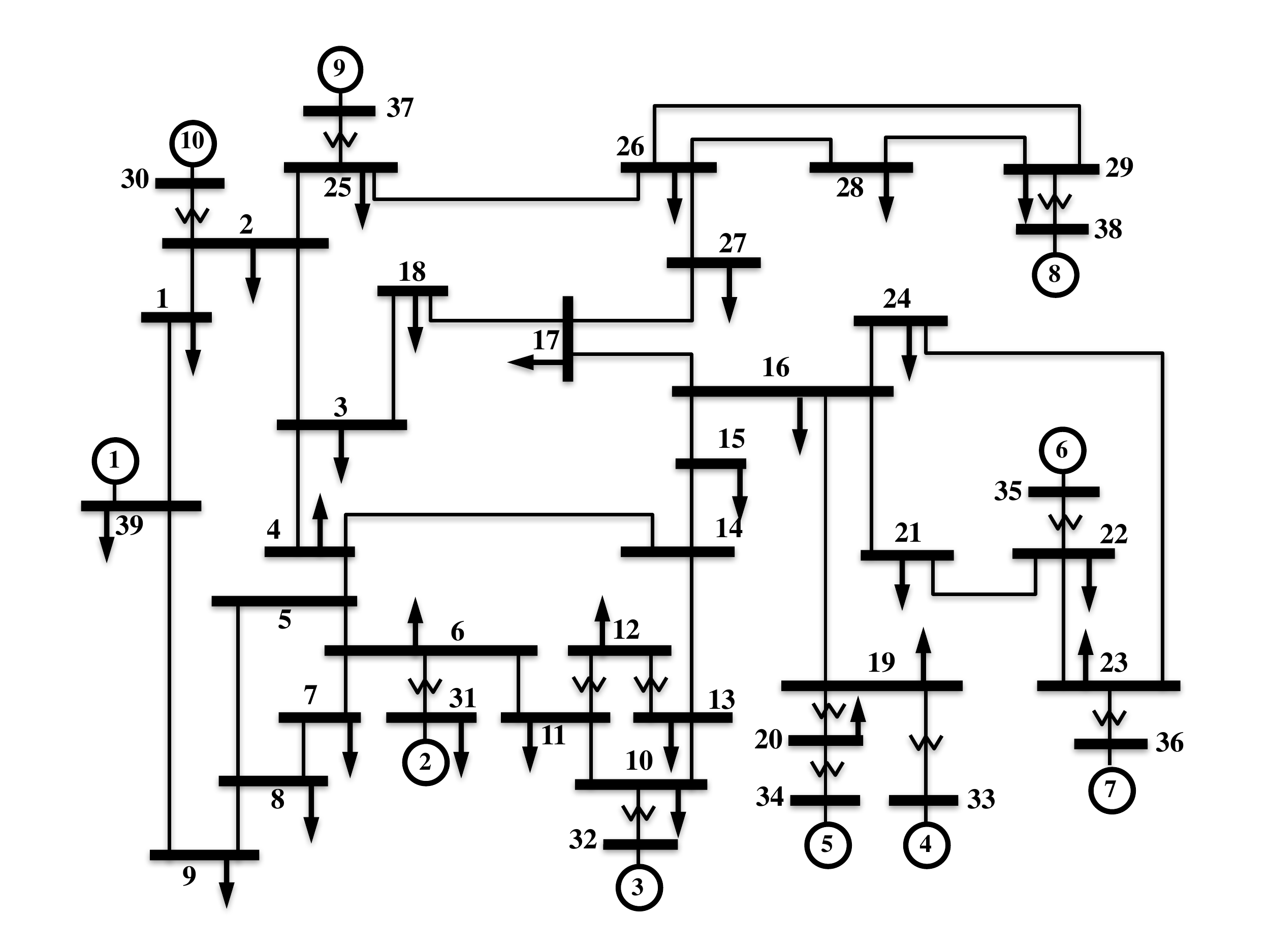}
\caption{IEEE 39 Bus System (New England)}\label{fig:new-england}
\end{figure}

\section{Numerical Illustrations}\label{sec:simulations}

In this section we numerically illustrate some of the features of our iDroop control using  the IEEE 39 bus system shown in Fig. \ref{fig:new-england}. The network parameters as well as the stationary starting point were obtained from the Power System Toolbox (PST)\cite{chow1992toolbox} dataset. 

Before building the dynamic model, we perform the Kron reduction for every load bus. Thus the final system has 10 buses that correspond to each generator in the nework. The inertia parameters of each generators are also obtained from the PST dataset. Throughout the time domain simulations we assume that the aggregate generator damping and load frequency sensitivity parameter is $D_i=0.1$. We also set the droop coefficient of each generator to $R_i^g=15$.

On each bus we add an additional inverter-based generator and vary their operational mode using either one of the three modes described in \eqref{eq:qr} (CP=constant power, DC=droop control and VI=virtual inertia), or the iDroop mode. The droop coefficient is set to $R_i^r=15$ in all the cases. For illustration purposes we select parameters so that the VI mode and iDroop have a similar frequency transient behavior. In particular, we choose $M_i^v=0.15$, $\delta_i=6$, $\nu_i=0.9$.

\begin{figure}[htp]
\includegraphics[width=\columnwidth]{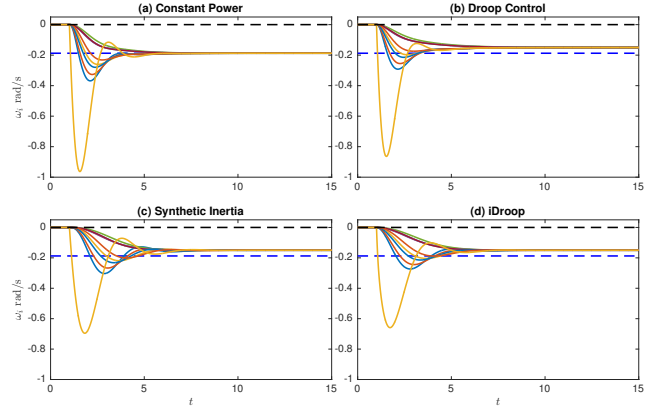}
\caption{Frequency deviations after perturbation}\label{fig:frequency}
\end{figure}

We initialize the system in steady state and at time $t=1$ we introduce a step change $\Delta P_{30}=-0.5$p.u. in the power injection at bus 30 (where generator 10 is located). 
Fig. \ref{fig:frequency} shows the evolution of the frequency deviations for the four cases: (a) CP, (b) DC, (c) VI and (d) iDroop. It can be seen that both VI (Fig. \ref{fig:frequency}-c) and iDroop (Fig. \ref{fig:frequency}-d) can reduce the Nadir (minimum frequency achieved), while achieving the the same steady state as the DC mode (Fig. \ref{fig:frequency}-b).

\begin{figure}[htp]
\includegraphics[width=\columnwidth]{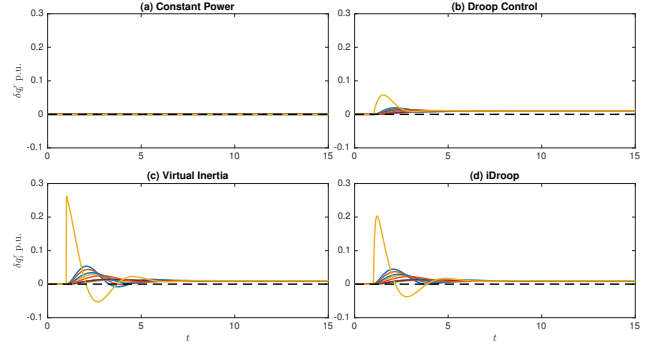}
\caption{Power deviation of inverter}\label{fig:power}
\end{figure}

However, the way this is achieved is in the case of VI and iDroop is completely different. This can be seen in Fig. \ref{fig:power} where we show the power deviation experienced by each individual inverter. In particular, the step change in the power injection introduces a discontinuity in $\dot\omega_{30}$. This is shown in Fig. \ref{fig:power}-c where the inverter at bus 30 has a step change in power. iDroop on the other hand is able to perform a similar task using less peak power and with a more desirable behavior.
This drastic change on the power experienced by the VI also illustrates why it is expected that this solution will produce noise amplification. 

\begin{figure}[htp]
\includegraphics[width=\columnwidth]{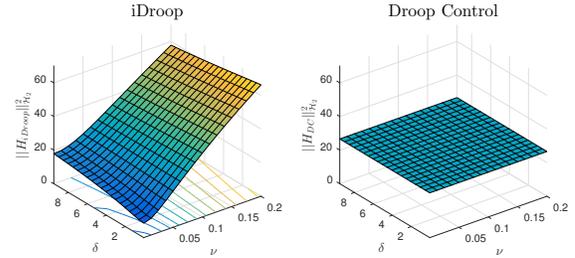}
\caption{$\mathcal H_2$-norm comparison between iDroop and Droop Control}\label{fig:h2}
\end{figure}

\begin{figure}[htp]
\includegraphics[width=\columnwidth]{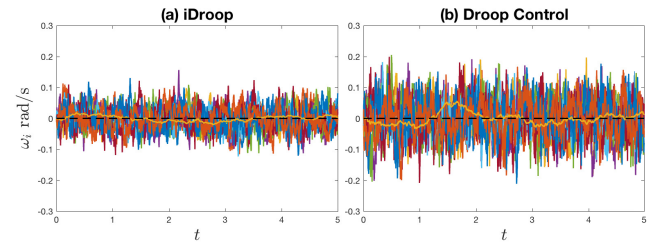}
\caption{Frequency fluctuations when $\delta_i=6$ and $\nu_i=0.01$}\label{fig:noise-sim}
\end{figure}

Finally, since the VI mode has unbounded $\mathcal H_2$-norm, we show in Fig. \ref{fig:h2} the   $\mathcal H_2$-norm of the system when the inverters are using only the DC mode and iDroop on a high measurement noise regime ($k_i^1=0.1$, $k_i^2=k_i^3=5$). We can see that DC does not vary with $\delta$ and $\nu$ (as expected) while iDroop does. However, more importantly, iDroop is able to achieve better performance than DC despite using frequency derivative measurements. This is also illustrated in Figure \ref{fig:noise-sim} where we simulate the frequency fluctuations  for the values of $\delta_i=6$ and $\nu_i=0.01$.

\section{Concluding Remarks}\label{sec:conclusions}
This paper studies the intrinsic trade off between steady state and dynamic performance of inverter-based droop control and several of its variants. We show that while the standard droop control can improve the dynamic performance, it can only achieve it by losing steady state efficiency. Moreover, our analysis also shows that the popular alternative of adding virtual inertia is very sensitive to measurement noise and can increase the frequency variance unboundedly.

To solve these issues we propose a new control scheme (iDroop) that is able to tune dynamic performance without altering steady state efficiency. We characterize the set of parameter values that can guarantee the stability of the steady state solution and illustrate its behavior numerically. In particular, we show that iDroop is able to reduce the Nadir and variance when compared with Constant Power (CP) and Droop Control (DC) modes.

\bibliographystyle{IEEEtran}
\bibliography{biblio.bib}

\end{document}